 \numberwithin{dummy}{section}
\newtheorem{algorithm}{Algorithm}
\newcommand{\bx}{{\bf x}}
\newcommand{\bv}{{\bf v}}
\def\T{{\mathcal T}}
\def\E{{\mathcal E}}
\def\pT{{\partial T}}
\def\l{{\langle}}
\def\r{{\rangle}}
\def\T{{\mathcal T}}
\def\E{{\mathcal E}}
\def\bn{{\bf n}}
\def\bbQ{\mathbb{Q}}
\def\3bar{{|\hspace{-.02in}|\hspace{-.02in}|}}
\title{A conforming discontinuous Galerkin finite element method}
\author{Xiu Ye\thanks{Department of
Mathematics, University of Arkansas at Little Rock, Little Rock, AR
72204 (xxye@ualr.edu). This research was supported in part by
National Science Foundation Grant DMS-1620016.}
\and
Shangyou Zhang\thanks{Department of
Mathematical Sciences, University of Delaware, Newark, DE 19716 (szhang@udel.edu).}
}
\begin{document}  \baselineskip=16pt\parskip=10pt
\maketitle

\begin{abstract}\baselineskip=13pt
A new finite element method with discontinuous approximation is introduced for solving second order elliptic problem. Since this method combines the features of both conforming  finite element method and discontinuous Galerkin (DG) method, we call it conforming DG method. While using DG finite element space, this conforming DG method maintains the features of the conforming finite element method such as simple formulation and strong enforcement of boundary condition.
Therefore, this finite element method has the flexibility of using discontinuous approximation  and simplicity in formulation of  the conforming finite element method.
Error estimates of optimal order are established for the corresponding discontinuous finite element approximation in both a discrete $H^1$ norm
and the $L^2$ norm. Numerical results are presented to confirm the theory.
\end{abstract}

\begin{keywords}
weak Galerkin, discontinuous Galerkin, finite element methods, second order elliptic problem
\end{keywords}

\begin{AMS}
Primary, 65N15, 65N30, 76D07; Secondary, 35B45, 35J50
\end{AMS}
\pagestyle{myheadings}

\section{Introduction}
For the sake of clear presentation, we consider Poisson equation with Dirichlet boundary condition in two dimension as our model problem. This conforming DG method can be extended to solve other elliptic problems.
The Poisson problem  seeks an unknown function $u$ satisfying
\begin{eqnarray}
-\Delta u&=&f,\quad \mbox{in}\;\Omega,\label{pde}\\
u&=&g,\quad\mbox{on}\;\partial\Omega,\label{bc}
\end{eqnarray}
where $\Omega$ is a polytopal domain in $\mathbb{R}^2$.

Researchers started to use discontinuous approximation in finite element procedure in the early 1970s \cite{Babu73, DoDu76,ReHi73, Whee78}. Local discontinuous Galerkin methods were introduced in \cite{cs1998}.   Then a paper \cite{abcm} in 2002 provides a unified analysis of discontinuous Galerkin (DG) finite element methods for Poisson equation.
Since then, many new finite element methods with discontinuous approximations have been developed such as
hybridizable discontinuous Galerkin (HDG) method \cite{cgl}, mimetic finite differences method \cite{Lipnikov2011},
hybrid high-order (HHO)  method
\cite{de}, virtual element (VE) method \cite{vbcmmr}, weak Galerkin (WG) method \cite{wy}  and references therein.

The weak form of the problem (\ref{pde})-(\ref{bc}) is given as follows: find $u\in H^1(\Omega)$
such that $u=g$ on $\partial\Omega$ and
\begin{eqnarray}
(\nabla u,\nabla v)=(f,v)\quad \forall v\in
H_0^1(\Omega).\label{weakform}
\end{eqnarray}

The conforming finite element method for the problem (\ref{pde})-(\ref{bc}) keeps the same simple form as in (\ref{weakform}). However,
when discontinuous approximation is used, finite element formulations tend to be more complex than (\ref{weakform}) to ensure  connection of discontinuous function across element boundary. For example, the following is the formulation for the symmetric interior penalty discontinuous Galerkin (IPDG) method for the Poisson equation (\ref{pde}) with homogeneous boundary condition: find $u_h\in V_h$ such that for all $v_h\in V_h$,
\[
\sum_{T\in\T_h}(\nabla u_h, \nabla v_h)_T-\sum_{e\in\E_h} \int_e
   \Big( \{\nabla u_h\}[v_h]+\{\nabla v_h\}[u_h] -\alpha h_e^{-1}[u_h][v_h]\Big)=(f,v_h),
\]
where $\alpha$ is called a penalty parameter that needs to be tuned.

A first order weakly over-penalized symmetric interior penalty method is proposed in \cite{bos} aiming for
  simplifying the above IPDG formulation by eliminating the two nonsymmetric middle terms:
  find $u_h\in V_h$  such that for all $v_h\in V_h$,
\begin{eqnarray*}
\sum_{T\in\T_h}(\nabla u_h,\nabla v_h)_T+\alpha\sum_{e\in\E_h}h_e^{-3}(\Pi_0[u_h],\;\Pi_0[v_h])_e=(f,v_h),
\end{eqnarray*}
where $\Pi_0$ is the $L^2$ projection to the constant space
   and $\alpha$ is a positive number. The price paid for a simpler formulation is a worse condition number for the
   resulting  system of linear   equations.

In this paper, we propose a new conforming DG method using the same finite element space
   used in the IPDG method for any polynomial degree $k\ge 1$
   but having a simple symmetric and positive definite system:
  find $u_h\in V_h$ satisfying $u_h=I_hg$ on $\partial\Omega$ and
\begin{equation}\label{intro}
(\nabla_w u_h,\nabla_w v_h)=(f,v_h)\quad \forall v_h\in V_h^0,
\end{equation}
where $\nabla_w$ is called weak gradient introduced in the weak Galerkin finite element method \cite{wy, wymix}. It follows from (\ref{intro}) that the conforming DG method can be obtained from the conforming formulation simply by replacing $\nabla$ by $\nabla_w$ and  enforcing the boundary condition strongly.   The simplicity of the conforming DG formulation will ease the complexity for implementation of DG methods. The computation of weak gradient $\nabla_w v$ is totally local. Optimal convergence rates for the conforming DG approximation are obtained in a
   discrete $H^1$ norm and in the $L^2$ norm.
 This new conforming DG method is tested numerically for $k=1,2,3,4$ and $5$, and the results confirm the theory.

\section{Finite Element Method}\label{Section:mwg}

In this section, we will introduce the conforming DG method.
For any given polygon $D\subseteq\Omega$, we use the standard
definition of Sobolev spaces $H^s(D)$ with $s\ge 0$. The associated inner product,
norm, and semi-norms in $H^s(D)$ are denoted by
$(\cdot,\cdot)_{s,D}$, $\|\cdot\|_{s,D}$, and $|\cdot|_{s,D}$, respectively. When $s=0$, $H^0(D)$ coincides with the space
of square integrable functions $L^2(D)$. In this case, the subscript
$s$ is suppressed from the notation of norm, semi-norm, and inner
products. Furthermore, the subscript $D$ is also suppressed when
$D=\Omega$.

Let ${\cal T}_h$ be a triangulation of the domain $\Omega$ with mesh
size $h$ that consists of triangles. Denote by
${\cal E}_h$ the set of all edges in ${\cal T}_h$, and let
${\cal E}_h^0={\cal E}_h\backslash\partial\Omega$ be the set of all
interior edges.

We define the average and the
jump on edges for a scalar-valued function $v$. For an
interior edge $e\in {\cal E}_h^0$, let $T_1$ and $T_2$ be two triangles
sharing $e$.
Let $\bn_1$ and $\bn_2$ be the two unit outward normal
  vectors on $e$, associated with ${T_1}$ and ${T_2}$, respectively.
Define the average $\{\cdot\}$ and the jump $[\cdot]$ on $e$ by
\begin{equation}\label{avg}
\{v\}=\frac12(v|_{T_1}+v|_{T_2}) \quad\hbox{and \ }  [v]=v|_{T_1}\bn_1+ v|_{T_2}\bn_2,
\end{equation}
 respectively.
If $e$ is a boundary edge, then
\begin{equation}\label{avgb}
\{v\}=v,\quad  [ v]= v \bn.
\end{equation}

For simplicity, we adopt the following notations,
\begin{eqnarray*}
(v,w) &=&(v,w)_{\T_h} = \sum_{T\in\T_h}(v,w)_T=\sum_{T\in\T_h}\int_T vw d\bx,\\
 \l v,w\r_{\partial\T_h}&=&\sum_{T\in\T_h} \l v,w\r_\pT=\sum_{T\in\T_h} \int_\pT vw ds.
\end{eqnarray*}

First we define two discontinuous finite element spaces for $k\ge 1$,
\begin{equation}\label{Vh}
V_h=\left\{ v\in L^2(\Omega):\ v|_{T}\in
P_{k}(T),\;\; T\in\T_h\right\},
\end{equation}
and
\begin{equation}\label{Vh0}
V_h^0=\left\{ v\in V_h:\ v=0 \;{\rm on}\;\partial\Omega\right\}.
\end{equation}


\smallskip

\begin{algorithm}
A conforming DG finite element method for the problem (\ref{pde})-(\ref{bc})
seeks $u_h\in V_h$  satisfying $u_h=I_h g$ on $\partial\Omega$ and
\begin{eqnarray}
(\nabla_w u_h,\nabla_w v)_{\T_h} &=&(f,\;v)\quad\forall v\in V_h^0,\label{mwg}
\end{eqnarray}
where $I_h$ is the $k$th order Lagrange interpolation.
\end{algorithm}

Next we will discuss how to compute weak gradient $\nabla_wu_h$ and $\nabla_w v$ in (\ref{mwg}).  The concept of weak gradient $\nabla_w$ was first introduced in \cite{wy,wymix} for weak functions in WG methods and was modified in \cite{mwg, mwg1} for the functions in $V_h$ in (\ref{Vh}) as follows.
For a given $T\in\T_h$ and a function $v\in V_h$, the weak gradient $\nabla_wv\in RT_k(T)$ on $T$ is the unique solution of the following equation,
\begin{equation}\label{wg}
(\nabla_{w} v, \tau)_T = -(v,\nabla\cdot \tau)_T+ \langle \{v\},
\tau\cdot\bn\rangle_{\partial T},\qquad \forall \tau\in RT_k(T),
\end{equation}
where $RT_k(T)=[P_k(T)]^2+{\bf x}P_k(T)$ and $\{v\}$ is defined in (\ref{avg}) and (\ref{avgb}). The weak gradient $\nabla_w$ is a local operator computed at each element.

\section{Well Posedness}

We start this section by introducing two semi-norms $\3bar v\3bar$ and  $\|v\|_{1,h}$
   for any $v\in V_h$ as follows:
\begin{eqnarray}
\3bar v\3bar^2 &=& \sum_{T\in\T_h}(\nabla_wv,\nabla_wv)_T, \label{norm2}\\
\|v\|_{1,h}^2&=&\sum_{T\in \T_h}\|\nabla v\|_T^2+\sum_{e\in\E_h^0}h_e^{-1}\|[v]\|_{e}^2.\label{norm3}
\end{eqnarray}

The following norm equivalences is proved in Lemma 3.2 \cite{mwwy} with $v_0=v$ and $v_b=\{v\}$ that there exist two constants $C_1$ and $C_2$ independent of $h$ such that
\begin{equation}\label{norm-e}
C_1\|v\|_{1,h}\le \3bar v\3bar\le C_2 \|v\|_{1,h},\quad\forall v\in V_h^0.
\end{equation}

\begin{lemma}
The semi-norm $\3bar\cdot\3bar$ defined in (\ref{norm2}) is a norm in $V_h^0$.
\end{lemma}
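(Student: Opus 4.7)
The plan is to reduce the claim to the two properties a semi-norm already satisfies (homogeneity and the triangle inequality, both inherited from the bilinear form $(\nabla_w\cdot,\nabla_w\cdot)_{\T_h}$) and concentrate all the work on positive definiteness: showing that $\3bar v\3bar = 0$ with $v\in V_h^0$ forces $v \equiv 0$.

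The main tool is the norm equivalence (\ref{norm-e}) already established for $v\in V_h^0$. If $\3bar v\3bar = 0$, then the lower bound $C_1\|v\|_{1,h}\le \3bar v\3bar$ immediately yields $\|v\|_{1,h} = 0$. Unpacking the definition (\ref{norm3}) of $\|\cdot\|_{1,h}$, this gives two pieces of information simultaneously: first, $\|\nabla v\|_T = 0$ on every $T\in\T_h$, and second, $\|[v]\|_e = 0$ on every interior edge $e\in\E_h^0$.

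Next I would combine these two pieces. The elementwise condition $\nabla v = 0$ on each $T$ forces $v$ to be a constant on each element (regardless of the polynomial degree $k$). The jump condition $[v]=0$ on all interior edges then means that these elementwise constants agree across every interior edge, so $v$ is actually a single global constant on $\Omega$ (using that $\Omega$ is connected through the mesh). Finally the homogeneous boundary condition built into $V_h^0$ in (\ref{Vh0}), namely $v=0$ on $\partial\Omega$, pins this constant down to $0$, yielding $v\equiv 0$.

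The only step that requires a little care is the conclusion that the piecewise-constant, jump-free function is a single global constant; but this is a standard mesh-connectivity argument and should not be a real obstacle. The rest of the norm axioms are automatic because $(\nabla_w\cdot,\nabla_w\cdot)_{\T_h}$ is a symmetric bilinear form, which makes $\3bar\cdot\3bar$ at least a semi-norm; positive definiteness, obtained above, then promotes it to a norm on $V_h^0$.
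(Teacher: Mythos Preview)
Your proposal is correct and follows essentially the same route as the paper: use the norm equivalence (\ref{norm-e}) to deduce $\|v\|_{1,h}=0$, read off $\nabla v=0$ elementwise and $[v]=0$ on interior edges, conclude $v$ is a single global constant, and invoke the boundary condition in $V_h^0$ to get $v\equiv 0$. Your write-up is in fact slightly more careful than the paper's, since you explicitly note that the remaining semi-norm axioms are automatic from the bilinear form and flag the mesh-connectivity step.
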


\smallskip

\begin{proof}
We only need to prove $v=0$ if $\3bar v\3bar=0$ for all $v\in V_h^0$. Let $v\in V_h^0$ and $\3bar v\3bar=0$. By (\ref{norm-e}), we have $\|v\|_{1,h}=0$ which implies that $\nabla v=0$ in each $T\in\T_h$ and $[v]=0$ on $e\in\E_h^0$.  $\nabla v=0$ on $T$ implies that $v$ is a constant on each $T$. $[v]=0$ on $e$ means that $v$ is continuous.
Thus $v$ is a global constant on the whole domain.
 With $v=0$ on $\partial\Omega$, we conclude $v=0$. This completes the proof of the lemma.
\end{proof}

The well posedness of the conforming DG method (\ref{mwg}) follows immediately from the above lemma.

\section{Error Equation}
In this section, we will derive an error equation which will be used in the convergence analysis. First we define
$H({\rm div};\Omega)$ space as the set of vector-valued
functions on $\Omega$ which, together with their divergence, are
square integrable; i.e.,
\[
H({\rm div}; \Omega)=\left\{ \bv: \ \bv\in [L^2(\Omega)]^2,
\nabla\cdot\bv \in L^2(\Omega)\right\}.
\]

Define  an interpolation operator $\bbQ_h$ for $\tau\in H({\rm div},\Omega)$ (see \cite{bf}) such that
$\bbQ_h\tau\in H({\rm div},\Omega)$,  $\bbQ_h\tau \in RT_k(T)$ on each $T\in {\cal T}_h$, and  satisfies:
\begin{equation}\label{key}
(\nabla\cdot\tau,\;v)_T=(\nabla\cdot\bbQ_h\tau,\;v)_T  \qquad
\forall v\in P_k(T).
\end{equation}

\begin{lemma}
For any $\tau\in  H({\rm div},\Omega)$,
\begin{equation}\label{key1}
-(\nabla\cdot\tau, \;v)_{\T_h}=(\bbQ_h\tau, \;\nabla_w v)_{\T_h} \quad\forall v\in V_h^0.
\end{equation}
\end{lemma}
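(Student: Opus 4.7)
The plan is to apply the definition of the weak gradient (\ref{wg}) with the test function $\tau$ replaced by $\bbQ_h\tau$, which is legitimate because $\bbQ_h\tau|_T \in RT_k(T)$. Summing the resulting identity over all $T \in \T_h$ yields
\begin{equation*}
(\nabla_w v, \bbQ_h\tau)_{\T_h} = -(v, \nabla\cdot \bbQ_h\tau)_{\T_h} + \sum_{T\in\T_h}\langle \{v\}, \bbQ_h\tau\cdot\bn\rangle_{\partial T}.
\end{equation*}
The first term on the right can be rewritten using the commuting property (\ref{key}) of $\bbQ_h$, element by element, since $v|_T \in P_k(T)$. This replaces $\nabla\cdot \bbQ_h\tau$ with $\nabla\cdot\tau$ and produces exactly the left-hand side of (\ref{key1}).

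The main step, and the only one needing care, is to show that the boundary term
\begin{equation*}
\sum_{T\in\T_h}\langle \{v\}, \bbQ_h\tau\cdot\bn\rangle_{\partial T}
\end{equation*}
vanishes. I would reorganize the sum by edges. On an interior edge $e$ shared by $T_1$ and $T_2$, the average $\{v\}$ is single-valued, while $\bbQ_h\tau \in H(\mathrm{div};\Omega)$ guarantees continuity of the normal component, so $\bbQ_h\tau\cdot\bn_1 + \bbQ_h\tau\cdot\bn_2 = 0$ on $e$. Hence the contributions from $T_1$ and $T_2$ cancel exactly. On a boundary edge we use $v \in V_h^0$, i.e.\ $v = 0$ on $\partial\Omega$, so $\{v\} = 0$ there. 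All boundary terms therefore drop out.

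The only conceivable obstacle is verifying that $\bbQ_h\tau$ is an admissible test function in (\ref{wg}) and that its normal trace is truly single-valued across interior edges; both are immediate from the stated construction of $\bbQ_h$ as a global $RT_k$ interpolant in $H(\mathrm{div};\Omega)$. Combining the two simplifications gives (\ref{key1}) directly.
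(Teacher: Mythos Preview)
Your proof is correct and follows essentially the same route as the paper: apply the weak-gradient definition (\ref{wg}) with $\bbQ_h\tau\in RT_k(T)$, use the commuting property (\ref{key}) to pass from $\nabla\cdot\bbQ_h\tau$ to $\nabla\cdot\tau$, and observe that the edge term $\langle\{v\},\bbQ_h\tau\cdot\bn\rangle_{\partial\T_h}$ vanishes by normal-trace continuity of $\bbQ_h\tau$ on interior edges and $\{v\}=0$ on boundary edges. The only difference is cosmetic: the paper runs the chain of equalities starting from $-(\nabla\cdot\tau,v)_{\T_h}$ rather than from $(\nabla_w v,\bbQ_h\tau)_{\T_h}$, and states the vanishing of the edge term in one line rather than spelling out the interior/boundary split.
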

\begin{proof}
Since $\{v\}=v=0$ on $\partial\Omega$ and $\bbQ_h\tau\in H({\rm div},\Omega)$, then
\begin{equation}\label{ee10}
  \langle \bbQ_h\tau\cdot\bn, \{v\}\rangle_{\partial \T_h}=0.
\end{equation}
It follows from (\ref{key}), (\ref{wg}) and (\ref{ee10}) that
\begin{eqnarray*}
-(\nabla\cdot\tau, \;v)_{\T_h}&=&-(\nabla\cdot\bbQ_h\tau, \;v)_{\T_h}\\
&=&-(\nabla\cdot\bbQ_h\tau, \;v)_{\T_h}+\l \{v\}, \bbQ_h\tau\cdot\bn\r_{\partial \T_h}\\
&=&(\bbQ_h\tau, \;\nabla_w v)_{\T_h},
\end{eqnarray*}
which proves the lemma.
\end{proof}

Define a continuous finite element space  $\tilde V_h$, a subspace of $V_h$, by
\begin{align}\label{tVh}
\tilde V_h = \{ v \in H^1(\Omega): \; v|_T \in P_k(T), \; \forall T\in\mathcal{T}_h\}.
\end{align}

\begin{lemma}\label{ge}
For any $v\in \tilde V_h$,
\begin{eqnarray*}
\nabla_wv=\nabla v.
\end{eqnarray*}
\end{lemma}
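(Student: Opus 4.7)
The plan is to compare the definition of the weak gradient in (\ref{wg}) with the ordinary integration-by-parts identity for $\nabla v$, using the fact that a function in $\tilde V_h$ is single-valued across element boundaries.

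Fix $T\in\T_h$ and $v\in\tilde V_h$. First I would note that $\nabla v|_T\in [P_{k-1}(T)]^2\subset RT_k(T)$, so $\nabla v$ is a legitimate candidate for $\nabla_w v$ on $T$. Next, for an arbitrary test function $\tau\in RT_k(T)$, I would apply the divergence theorem on $T$ to obtain
\begin{equation*}
(\nabla v,\tau)_T = -(v,\nabla\cdot\tau)_T + \langle v,\tau\cdot\bn\rangle_{\partial T}.
\end{equation*}
The critical step is identifying the trace $v|_{\partial T}$ with $\{v\}$. Since $v\in\tilde V_h\subset H^1(\Omega)$, its trace is single-valued on every edge: on an interior edge $e\subset\partial T\cap\E_h^0$ shared with a neighbor $T'$ we have $v|_T=v|_{T'}$ on $e$, hence $\{v\}=v$ by (\ref{avg}); on a boundary edge $\{v\}=v$ directly from (\ref{avgb}). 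Therefore the boundary term above equals $\langle\{v\},\tau\cdot\bn\rangle_{\partial T}$, and comparing with (\ref{wg}) yields
\begin{equation*}
(\nabla v - \nabla_w v,\tau)_T = 0 \qquad \forall\,\tau\in RT_k(T).
\end{equation*}

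Finally, since both $\nabla v|_T$ and $\nabla_w v|_T$ lie in $RT_k(T)$, I would test against $\tau=\nabla v - \nabla_w v$ to conclude $\nabla_w v=\nabla v$ on $T$. As $T$ was arbitrary, the identity holds throughout $\T_h$. There is really no genuine obstacle here: the only subtlety is recognizing that the continuity of $v$ collapses the average $\{v\}$ to the one-sided trace, which is exactly what bridges the weak and classical definitions of the gradient.
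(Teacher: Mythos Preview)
Your proof is correct and follows essentially the same route as the paper: integrate by parts, use continuity of $v\in\tilde V_h$ to identify $\{v\}$ with the one-sided trace, and then test against $\tau=\nabla_w v-\nabla v\in RT_k(T)$. If anything, your version is slightly more explicit in justifying $\nabla v|_T\in RT_k(T)$ and the single-valuedness of the trace.
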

\begin{proof}
By the definition of the weak gradient (\ref{wg}) and integration by parts, we have for any $\tau\in RT_k(T)$,
\begin{eqnarray*}
(\nabla_{w} v, \tau)_T &=& -(v,\nabla\cdot \tau)_T+ \langle \{v\},
\tau\cdot\bn\rangle_{\partial T}\\
&=&-(v,\nabla\cdot \tau)_T+ \langle v,
\tau\cdot\bn\rangle_{\partial T}\\
&=&(\nabla v, \tau)_T,
\end{eqnarray*}
which implies
\begin{eqnarray*}
(\nabla_{w} v-\nabla v, \tau)_T=0, \quad\forall \tau\in RT_k(T).
\end{eqnarray*}
Since $\nabla_{w} v-\nabla v\in RT_k(T)$, letting $\tau =\nabla_{w} v-\nabla v$ in the above equation gives
\[
\|\nabla_{w} v-\nabla v\|_T^2=0,
\]
which proves the lemma.
\end{proof}

Let $e_h=I_hu-u_h$. Obviously, $e_h\in V_h^0$. Recall that $I_hu$  is the $k$th order Lagrange interpolation of $u$ and then $I_hu\in \tilde V_h$. By Lemma \ref{ge}, we have
\begin{eqnarray}
\nabla_wI_hu&=&\nabla I_hu.\label{ee11}
\end{eqnarray}

\smallskip

\begin{lemma}\label{Lemma:error-equation}
Let $e_h=I_hu-u_h$ be the error of the finite element solution arising from (\ref{mwg}). Then we have
\begin{eqnarray}
(\nabla_we_h,\; \nabla_wv)_{\T_h}&=&l(u,v),\quad\forall v\in V_h^0,\label{ee}
\end{eqnarray}
where
\begin{eqnarray}
l(u,v)&=&(\nabla I_hu-\bbQ_h\nabla u,\; \nabla_wv)_{\T_h}.\label{ll}
\end{eqnarray}
\end{lemma}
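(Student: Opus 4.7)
The plan is to derive the error equation by a straight subtraction of the continuous and discrete problems, using the two preceding lemmas to bridge between $\nabla$ and $\nabla_w$ and between $\nabla\cdot$ and $\bbQ_h$.

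First I would split $(\nabla_w e_h,\nabla_w v)_{\T_h}$ using linearity as
\begin{equation*}
(\nabla_w e_h,\nabla_w v)_{\T_h} = (\nabla_w I_h u,\nabla_w v)_{\T_h} - (\nabla_w u_h,\nabla_w v)_{\T_h},
\end{equation*}
and then eliminate the second term by invoking the scheme (\ref{mwg}), which for $v\in V_h^0$ replaces $(\nabla_w u_h,\nabla_w v)_{\T_h}$ with $(f,v)$. Since $I_h u\in \tilde V_h$, Lemma \ref{ge} immediately rewrites the first term as $(\nabla I_h u,\nabla_w v)_{\T_h}$.

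Next I would process $(f,v)$. Because $u$ solves (\ref{pde}), $f=-\nabla\cdot(\nabla u)$, so $(f,v) = -(\nabla\cdot\nabla u,\,v)_{\T_h}$. Assuming the mild regularity $u\in H^2(\Omega)$ so that $\nabla u\in H({\rm div};\Omega)$ (which is standard for the Poisson problem on a polygonal domain with $f\in L^2$), I may apply the identity (\ref{key1}) with $\tau=\nabla u$, yielding
\begin{equation*}
(f,v) = -(\nabla\cdot\nabla u,\,v)_{\T_h} = (\bbQ_h \nabla u,\,\nabla_w v)_{\T_h}.
\end{equation*}
Collecting these substitutions gives
\begin{equation*}
(\nabla_w e_h,\nabla_w v)_{\T_h} = (\nabla I_h u,\nabla_w v)_{\T_h} - (\bbQ_h\nabla u,\nabla_w v)_{\T_h} = l(u,v),
\end{equation*}
which is exactly the claimed identity.

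There is really no hard step here: the whole argument is just bookkeeping that rests on three already-established facts, namely the scheme (\ref{mwg}), the commuting-type property (\ref{key1}) of $\bbQ_h$ with $\nabla_w$, and the consistency $\nabla_w=\nabla$ on $\tilde V_h$ from Lemma \ref{ge}. The only point that deserves a line of care is verifying that $\nabla u\in H({\rm div};\Omega)$ so that (\ref{key1}) is applicable; this follows from the standard $H^2$ regularity assumed implicitly for the model problem.
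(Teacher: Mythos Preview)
Your proof is correct and follows essentially the same route as the paper: both arguments combine the scheme (\ref{mwg}), the identity (\ref{key1}) applied with $\tau=\nabla u$, and the consistency $\nabla_w I_h u=\nabla I_h u$ from Lemma~\ref{ge}, differing only in the order of the bookkeeping. Your added remark about needing $\nabla u\in H({\rm div};\Omega)$ is a fair point that the paper leaves implicit.
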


\begin{proof} Testing the equation (\ref{pde}) by $v\in V_h^0$ gives
\begin{equation}\label{m1}
-(\nabla\cdot\nabla u, v)=(f,v).
\end{equation}
It follows from (\ref{key1}) that
\begin{eqnarray}
(\bbQ_h \nabla u,\nabla_w v)_{\T_h}&=&(f,v).\label{j1}
\end{eqnarray}

Adding $(\nabla_w I_hu,\nabla_w v)_{\T_h}$ to the both sides of the equation (\ref{j1}) and using (\ref{ee11})  yield
\begin{eqnarray}
(\nabla_w I_hu,\nabla_w v)_{\T_h}&=&(f,v)+(\nabla I_hu-\bbQ_h\nabla u,\nabla_w v)_{\T_h}.\label{j2}
\end{eqnarray}
The difference of (\ref{j2}) and (\ref{mwg}) gives (\ref{ee}).
We have proved the lemma.
\end{proof}

\section{Error Estimates}\label{Section:error-analysis}
In this section, we shall establish optimal order error estimates
for $u_h$ in  a discrete $H^1$ norm and the $L^2$ norm.

\subsection{An Estimate in a Discrete $H^1$ Norm}
We start this subsection by bounding  the term $l(u,v)$ defined in (\ref{ll}).

\smallskip

\begin{lemma}
Let $u\in H^{k+1}(\Omega)$  and
$v\in V_h^0$.  Then, the following estimate holds,
\begin{eqnarray}
|l(u, v)|&\le& Ch^{k}|u|_{k+1}\3bar v\3bar.\label{mmm1}
\end{eqnarray}
\end{lemma}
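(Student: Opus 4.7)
The plan is to bound $l(u,v)$ by a direct application of Cauchy--Schwarz followed by standard interpolation estimates.

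First I would apply the Cauchy--Schwarz inequality on each element and sum over $\T_h$:
\begin{equation*}
|l(u,v)| \le \|\nabla I_h u - \bbQ_h \nabla u\|_{\T_h}\,\|\nabla_w v\|_{\T_h}.
\end{equation*}
By the definition (\ref{norm2}) of $\3bar\cdot\3bar$, the second factor is exactly $\3bar v\3bar$, so the entire task reduces to showing
\begin{equation*}
\|\nabla I_h u - \bbQ_h \nabla u\|_{\T_h} \le C h^{k} |u|_{k+1}.
\end{equation*}

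Next I would insert $\nabla u$ and apply the triangle inequality to split this into two standard interpolation errors:
\begin{equation*}
\|\nabla I_h u - \bbQ_h \nabla u\|_{\T_h} \le \|\nabla(u - I_h u)\|_{\T_h} + \|\nabla u - \bbQ_h \nabla u\|_{\T_h}.
\end{equation*}
The first term is controlled by the classical Lagrange interpolation estimate, giving $C h^{k}|u|_{k+1}$ for $u\in H^{k+1}(\Omega)$. The second term is controlled by the standard approximation property of the Raviart--Thomas interpolation $\bbQ_h$ defined in (\ref{key}); applied to $\tau=\nabla u$ it yields $C h^{k}|\nabla u|_{k} = Ch^k|u|_{k+1}$.

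Combining these two bounds with the Cauchy--Schwarz step gives the claimed estimate. There is no real obstacle here: both interpolation estimates are classical (the Lagrange bound is textbook, and the RT bound is the one referenced through \cite{bf} around (\ref{key})), and the weak gradient enters only through the trivial identification of $\|\nabla_w v\|_{\T_h}$ with $\3bar v\3bar$. The only thing to be careful about is matching the Sobolev index on the RT estimate so that the regularity used is exactly $u\in H^{k+1}(\Omega)$, which matches the hypothesis and the dominant $h^k$ rate coming from the Lagrange term.
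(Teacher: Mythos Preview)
Your proposal is correct and follows essentially the same route as the paper: Cauchy--Schwarz to separate off $\3bar v\3bar$, then a triangle-inequality split $\nabla I_h u - \bbQ_h\nabla u = (\nabla I_h u - \nabla u) + (\nabla u - \bbQ_h\nabla u)$ with the classical Lagrange and Raviart--Thomas estimates. The only cosmetic difference is that the paper writes the split inside a single sum of squares rather than as two separate norms.
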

\begin{proof}
Using the Cauchy-Schwarz inequality and the definition of $I_h$ and $\bbQ_h$, we have
\begin{eqnarray*}
l(u,v)&=&(\nabla I_h u-\bbQ_h(\nabla u),\nabla_w v)_{\T_h}\\
&\le&\sum_{T\in \T_h} \|\nabla I_h u-\bbQ_h(\nabla u)\|_T\|\nabla_w v\|_T\\
&\le&\Big(\sum_{T\in \T_h} \|\nabla I_h u-\bbQ_h(\nabla u)\|_T^2\Big)^{1/2}
   \Big(\sum_{T\in \T_h} \|\nabla_w v\|_T^2\Big)^{1/2}\\
&\le&\Big(\sum_{T\in \T_h} \|\nabla I_h u-\nabla u\|_T^2+\|\nabla u-\bbQ_h(\nabla u)\|_T^2\Big)^{1/2}\3bar v\3bar\\
&\le& Ch^k|u|_{k+1}\3bar v\3bar,
\end{eqnarray*}
which proves the lemma.
\end{proof}

\smallskip

\begin{theorem}\label{thm1}
Let $u_h\in V_h$ be the  finite element solution of (\ref{mwg}). Assume the exact solution $u\in H^{k+1}(\Omega)$. Then,
there exists a constant $C$ such that
\begin{equation}\label{err1}
\3bar u_h-I_h u\3bar \le Ch^{k}|u|_{k+1}.
\end{equation}
\end{theorem}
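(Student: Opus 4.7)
The plan is to run a standard energy argument using the machinery already assembled: the error equation in Lemma \ref{Lemma:error-equation} together with the bound \eqref{mmm1} on $l(u,v)$. Because $\3bar\cdot\3bar$ has been shown to be a norm on $V_h^0$ and $e_h = I_h u - u_h \in V_h^0$, I may simply test the error equation against $e_h$ itself and rearrange.

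Concretely, first I would set $v = e_h$ in \eqref{ee}, which yields
\begin{equation*}
\3bar e_h \3bar^2 \;=\; (\nabla_w e_h,\nabla_w e_h)_{\T_h} \;=\; l(u,e_h).
\end{equation*}
Next I would invoke the bound \eqref{mmm1} with $v = e_h$ to obtain
\begin{equation*}
\3bar e_h \3bar^2 \;\le\; C h^k |u|_{k+1}\, \3bar e_h \3bar.
\end{equation*}
If $\3bar e_h \3bar = 0$ the conclusion is trivial; otherwise I divide by $\3bar e_h \3bar$ to arrive at the claimed estimate $\3bar u_h - I_h u \3bar \le C h^k |u|_{k+1}$.

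There is no real obstacle here, since the two pieces of input have already been prepared: the Galerkin-style error equation reduces the analysis to controlling the consistency functional $l(u,\cdot)$, and that control has been established via the combined approximation properties of the Lagrange interpolant $I_h$ and the Raviart--Thomas-type projection $\bbQ_h$. The only thing to double-check is that $e_h$ is indeed admissible as a test function in \eqref{ee}, which holds because $u_h = I_h g$ and $(I_h u)|_{\partial\Omega} = I_h g$ agree on $\partial\Omega$, so $e_h \in V_h^0$. Thus the theorem follows in essentially one line of algebra from the preceding two lemmas.
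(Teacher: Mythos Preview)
Your proposal is correct and follows exactly the paper's own argument: test the error equation \eqref{ee} with $v=e_h$ to obtain $\3bar e_h\3bar^2=l(u,e_h)$, then apply the bound \eqref{mmm1} and divide through. Your extra remarks on the trivial case $\3bar e_h\3bar=0$ and on why $e_h\in V_h^0$ are accurate and only make the reasoning more explicit than the paper's version.
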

\begin{proof}
Letting $v=e_h$ in (\ref{ee}) gives
\begin{eqnarray}
\3bar e_h\3bar^2&=&l(u, e_h).\label{main}
\end{eqnarray}
Using (\ref{mmm1}), we arrive
\[
\3bar e_h\3bar^2 \le Ch^{k}|u|_{k+1}\3bar e_h\3bar,
\]
which completes the proof.
\end{proof}

\subsection{An Estimate in the $L^2$ Norm}
In this subsection, we will derive the error estimate for $u_h$ in the $L^2$ norm.
First we define $\tilde V_h^0$ a subspace of $\tilde V_h$ in (\ref{tVh}) as
\begin{align}
\tilde V_h^0 = \{ v\in \tilde V_h : v|_{\partial \Omega}=0\}.
\end{align}
Let $\tilde u_h\in \tilde V_h $ be the conforming finite element solution such that
    $\tilde u_h=I_hg$ on $\partial\Omega$ and satisfies
\begin{align}\label{l240}
(\nabla \tilde u_h, \nabla v) = (f,  v)\quad \forall v \in \tilde V_h^0 .
\end{align}

Since $\tilde V_h^0\subset  V_h^0$, by Lemma \ref{ge}, (\ref{mwg}) and (\ref{l240}), we have
\begin{align} \label{l230}
(\nabla_w  u_h - \nabla\tilde u_h, \nabla v) = 0, \quad \forall  v \in \tilde V_h^0.
\end{align}

Consider the dual problem: seek $\Phi\in H_0^1(\Omega)$ satisfying
\begin{eqnarray}
-\nabla\cdot ( \nabla\Phi)&=&u_h- \tilde u_h\quad
\mbox{in}\;\Omega.\label{dual}
\end{eqnarray}
Assume that the following $H^{2}$-regularity holds
\begin{equation}\label{reg}
\|\Phi\|_2\le C\|u_h- \tilde u_h\|.
\end{equation}

Now we are ready to derive the $L^2$ error estimate.

\smallskip

\begin{theorem} Let $u_h\in V_h$ be the finite element solution of (\ref{mwg}). Assume that the
exact solution $u\in H^{k+1}(\Omega)$ and that (\ref{reg}) holds true.
 Then, there exists a constant $C$ such that
\begin{equation}\label{err2}
\|u-u_h\| \le Ch^{k+1}|u|_{k+1}.
\end{equation}
\end{theorem}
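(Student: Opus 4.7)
The plan is to avoid comparing $u_h$ with $u$ directly, and instead exploit the conforming companion $\tilde u_h$ introduced in (\ref{l240}). Via the triangle inequality
\[
\|u-u_h\|\le \|u-\tilde u_h\|+\|\tilde u_h-u_h\|,
\]
the first term is controlled by the standard $L^2$ estimate for the conforming $P_k$ finite element method: $\|u-\tilde u_h\|\le Ch^{k+1}|u|_{k+1}$ (Aubin--Nitsche, using the same $H^2$-regularity assumption (\ref{reg})). The whole work is therefore to show $\|\tilde u_h-u_h\|\le Ch^{k+1}|u|_{k+1}$, and this is where the dual problem (\ref{dual}) enters.

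First I would observe that $u_h-\tilde u_h\in V_h^0$: both $u_h$ and $\tilde u_h$ equal $I_hg$ on $\partial\Omega$, so the difference vanishes there, while the interior discontinuities are absorbed into $V_h^0$. Testing (\ref{dual}) against $u_h-\tilde u_h$ gives
\[
\|u_h-\tilde u_h\|^2 = -(\nabla\cdot\nabla\Phi,\, u_h-\tilde u_h)_{\T_h},
\]
and applying the key identity (\ref{key1}) with $\tau=\nabla\Phi$ rewrites the right side as $(\bbQ_h\nabla\Phi,\,\nabla_w(u_h-\tilde u_h))_{\T_h}$. Using Lemma~\ref{ge} to replace $\nabla_w\tilde u_h$ by $\nabla\tilde u_h$ yields
\[
\|u_h-\tilde u_h\|^2 = (\bbQ_h\nabla\Phi,\,\nabla_w u_h-\nabla\tilde u_h)_{\T_h}.
\]

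Next I would inject orthogonality. Since $\Phi\in H_0^1(\Omega)$, its Lagrange interpolant $I_h\Phi$ lies in $\tilde V_h^0$, so (\ref{l230}) (combined with Lemma~\ref{ge}) allows me to subtract $(\nabla I_h\Phi,\,\nabla_w u_h-\nabla\tilde u_h)_{\T_h}=0$ for free, producing
\[
\|u_h-\tilde u_h\|^2 = (\bbQ_h\nabla\Phi-\nabla I_h\Phi,\,\nabla_w u_h-\nabla\tilde u_h)_{\T_h}.
\]
Cauchy--Schwarz, together with standard approximation properties of $I_h$ and $\bbQ_h$ and the regularity (\ref{reg}), bounds the first factor by $Ch\,\|\Phi\|_2\le Ch\,\|u_h-\tilde u_h\|$. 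For the second factor I would use the triangle inequality $\|\nabla_w u_h-\nabla\tilde u_h\|_{\T_h}\le \3bar u_h-I_hu\3bar + \3bar I_hu-\tilde u_h\3bar$: the first piece is $Ch^k|u|_{k+1}$ by Theorem~\ref{thm1}, and the second piece is $\|\nabla(I_hu-\tilde u_h)\|$ (by Lemma~\ref{ge}), which is $Ch^k|u|_{k+1}$ by standard conforming FEM bounds. Cancelling one power of $\|u_h-\tilde u_h\|$ then gives $\|u_h-\tilde u_h\|\le Ch^{k+1}|u|_{k+1}$, which combined with the conforming estimate yields (\ref{err2}).

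The main obstacle I anticipate is the bookkeeping in the second step: (\ref{key1}) requires the test function in $V_h^0$, and (\ref{l230}) the test function in $\tilde V_h^0$, so I have to be careful that $u_h-\tilde u_h$ really satisfies the first condition (which rests on $u_h$ and $\tilde u_h$ carrying the \emph{same} boundary data $I_hg$) and that the interpolant $I_h\Phi$ satisfies the second (which uses $\Phi\in H_0^1$). Once these two memberships are verified, the remainder is a standard duality estimate plus the approximation bounds for $I_h$ and $\bbQ_h$.
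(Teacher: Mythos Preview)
Your proof is correct and shares the paper's overall architecture: split via $\tilde u_h$, invoke the standard conforming $L^2$ estimate for $\|u-\tilde u_h\|$, and handle $\|u_h-\tilde u_h\|$ by duality together with the orthogonality (\ref{l230}) tested against $I_h\Phi\in\tilde V_h^0$. The one genuine difference is in how the dual problem is discretized. The paper introduces the \emph{discrete} dual solution $\Phi_h\in V_h^0$ satisfying $(\nabla_w\Phi_h,\nabla_w v)=(u_h-\tilde u_h,v)$, writes $\|u_h-\tilde u_h\|^2=(\nabla_w(\Phi_h-I_h\Phi),\nabla_w u_h-\nabla\tilde u_h)_{\T_h}$, and then bounds $\3bar\Phi_h-I_h\Phi\3bar\le Ch|\Phi|_2$ by reapplying Theorem~\ref{thm1} to the dual problem. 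You instead bypass $\Phi_h$ altogether: you test the continuous dual equation (\ref{dual}) with $u_h-\tilde u_h\in V_h^0$, invoke (\ref{key1}) to land on $\bbQ_h\nabla\Phi$, and then bound $\|\bbQ_h\nabla\Phi-\nabla I_h\Phi\|$ by pure approximation properties. Your route is slightly more elementary, since it avoids defining and analyzing a second discrete problem; the paper's route is a bit more self-contained, since it recycles the already-proved $H^1$ estimate rather than re-deriving an approximation bound. Both lead to the identical final inequality and no gap is present in your argument.
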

\begin{proof}
By the triangle inequality, we have
\begin{equation}\label{ti}
\|u-u_h\|\le \|u- \tilde u_h\|+ \|u_h- \tilde u_h\|.
\end{equation}
The definition of $\tilde u_h$ implies
\begin{equation}\label{l231}
\|u- \tilde u_h\|\le Ch^{k+1}|u|_{k+1}.
\end{equation}
Next we will estimate $\|u_h- \tilde u_h\|$.
Let $\Phi_h\in V_h^0$ be the conforming DG approximation to the problem (\ref{dual}) satisfying
\begin{equation}\label{l232}
(\nabla_w\Phi_h,\nabla_w v) =(u_h- \tilde u_h, v),\quad\forall v\in V_h^0.
\end{equation}
Letting $v=u_h-\tilde u_h\in V^0_h$ in (\ref{l232}) and using Lemma \ref{ge} and (\ref{l230}),  we have,
\begin{align*}
\|u_h - \tilde u_h\|^2 &=(\nabla_w \Phi_h, \nabla_w(u_h- \tilde u_h))_{\T_h}
                       = ( \nabla_w \Phi_h,  \nabla_w u_h- \nabla \tilde u_h)_{\T_h} \\
               &=(  \nabla_w (\Phi_h - I_h \Phi),   \nabla_w u_h- \nabla \tilde u_h)_{\T_h}.
\end{align*}
By the Cauchy-Schwartz inequality, (\ref{err1}) and (\ref{reg}), then
\begin{align*}
\|u_h - \tilde u_h\|^2
    &\le \3bar \Phi_h - I_h \Phi\3bar \; (\3bar u_h- I_h u \3bar+\|\nabla( I_h u - \tilde u_h)\|) \\
    &\le C h |\Phi|_2 h^{k} |u|_{k+1} \\
    &\le Ch^{k+1} |u|_{k+1}\| u_h-\tilde u_h \|,
\end{align*}
which implies
\begin{equation}\label{l233}
\|u_h - \tilde u_h\|\le Ch^{k+1}|u|_{k+1}.
\end{equation}
Combining (\ref{l231}) and (\ref{l233}) with (\ref{ti}), we have proved the theorem.
\end{proof}

\section{Numerical Example}

 We solve the following Poisson equation on the unit square:
\begin{align} \label{s1} -\Delta u = 2\pi^2 \sin\pi x\sin \pi y,  \quad (x,y)\in\Omega=(0,1)^2,
\end{align} with the boundary condition $u=0$ on $\partial \Omega$.

In computation, the first grid consists of two unit right triangles cutting from the unit square by a forward
  slash.   The high level grids are the half-size refinement of the previous grid.
We apply $P_k$ finite element methods $V_h$ and list the error and the order of convergence in
  the following  table.
The numerical results confirm the convergence theory.

\begin{table}[h!]
  \centering \renewcommand{\arraystretch}{1.3}
  \caption{Error profiles and convergence rates for \eqref{s1} }\label{t1}
\begin{tabular}{c|cc|cc}
\hline 
level & $\|u_h- u\| $  &rate & $\3bar u_h-I_h u\3bar $ &rate \\
\hline
  &\multicolumn{4}{c}{by $P_1$ elements} \\ \hline
 6&   0.7280E-03 & 2.09&   0.7199E-01 & 0.91\\
 7&   0.1751E-03 & 2.06&   0.3718E-01 & 0.95\\
 8&   0.4287E-04 & 2.03&   0.1890E-01 & 0.98\\
 \hline
 &\multicolumn{4}{c}{by $P_2$ elements} \\ \hline
 6&   0.6446E-05 & 2.94&   0.1744E-02 & 1.95\\
 7&   0.8197E-06 & 2.98&   0.4424E-03 & 1.98\\
 8&   0.1033E-06 & 2.99&   0.1113E-03 & 1.99\\
 \hline
 &\multicolumn{4}{c}{by $P_3$ elements} \\ \hline
 6&   0.4457E-07 & 4.02&   0.2293E-04 & 2.97\\
 7&   0.2772E-08 & 4.01&   0.2902E-05 & 2.98\\
 8&   0.1730E-09 & 4.00&   0.3650E-06 & 2.99\\
 \hline
 &\multicolumn{4}{c}{by $P_4$ elements} \\ \hline
 5&   0.2057E-07 & 5.03&   0.4748E-05 & 3.95\\
 6&   0.6344E-09 & 5.02&   0.3009E-06 & 3.98\\
 7&   0.1984E-10 & 5.00&   0.1893E-07 & 3.99\\
 \hline
 &\multicolumn{4}{c}{by $P_5$ elements} \\ \hline
 4&   0.2481E-07 & 6.04&   0.3223E-05 & 4.94\\
 5&   0.3811E-09 & 6.02&   0.1024E-06 & 4.98\\
 6&   0.5938E-11 & 6.00&   0.3225E-08 & 4.99\\
 \hline
    \end{tabular}%
\end{table}%

\end{document}